\newtheorem{theorem}{Theorem}
\newtheorem{conjecture}{Conjecture}
\newtheorem{corollary}[theorem]{Corollary}
\newtheorem{definition}[theorem]{Definition}
\newtheorem{hypothesis}{Hypothesis}
\newtheorem{proposition}[theorem]{Proposition}
\newtheorem{axiom}{Axiom}
\newcommand\point{\mathrm{pt}}
\begin{document}
\title{Axioms for the $g$-vector of general convex polytopes}
\author{Jonathan Fine}
\date{18 November 2010}

\maketitle

\begin{abstract}
McMullen's $g$-vector is important for simple convex polytopes.  This
paper postulates axioms for its extension to general convex polytopes.
It also conjectures that, for each dimension $d$, a stated finite
calculation gives the formula for the extended $g$-vector.  This
calculation is done by computer for $d=5$ and the results analysed.
The conjectures imply new linear inequalities on convex polytope flag
vectors.  Underlying the axioms is a hypothesised higher-order homology
extension to middle perversity intersection homology (order-zero
homology), which measures the failure of lower-order homology to have a
ring structure.
\end{abstract}

\bigskip

\begin{quotation}
\noindent
How often have I said to you that when you have eliminated the
impossible, whatever remains, however improbable, must be the truth?

\ \hfill Sherlock Holmes in \emph{The Sign of the Four} by
Sir Arthur Conan Doyle
\end{quotation}

%\newpage

\section{Introduction}

This section describes the purpose and key concepts of this paper.
Every simple convex polytope $X$ has a $g$-vector $g(X)$, a linear
function of the face vector $f(X)$.  McMullen \cite{mcm71:num}
introduced the $g$-vector in 1971 to express his conjectured
conditions on $f(X)$. In 1980 Stanley~\cite{sta80:num} proved the
necessity of these conditions, and Billera and Lee~\cite{billee80:suf}
the sufficiency.  The distant goal is to find and prove the extension
of these conditions to general convex polytopes.

McMullen's conditions have three parts: (1)~linear equations on
$f(X)$, (2)~linear inequalities $g_i(X) \geq 0$, and (3)~pseudo-power
growth limits $g_{i+1}\leq g_i^{\langle i \rangle}$.  They correspond
to there being homology groups $H(X)$ that respectively satisfy
Poincar\'e duality, satisfy hard Lefschetz and are a ring generated by
the facets.  McMullen at the time was not aware of this connection.

The main results in this paper are: (1)~axioms which are conjectured
to determine the extended $g$-vector; (2)~a conjecture that reduces
this determination to a finite calculation; and (3)~this calculation
performed by computer in dimension $d=5$.  Almost all of the
calculation is using the axioms to eliminate the impossible.

Two partial extensions are already known. Bernstein, Khovanski and
MacPherson independently (see \cite[\S4.1]{dcml09:decomp}) found one
in around 1982. From a rational convex polytope $X$ a projective
algebraic variety $\mathbb{P}_X$ can be constructed (via toric
geometry).  The middle perversity intersection homology (mpih) Betti
numbers $h(X)$ of $\mathbb{P}_X$ are by Poincar\'e duality palindromic.
As in the simple case, $g_i(X) = h_i(X) - h_{i-1}(X)$ for $0 \leq 2i
\leq \dim X $ defines $g_i$.  This extends the simple definition.  By
hard Lefschetz, $g_i(X) \geq 0$.

This $g$-vector is a linear function not of the face vector of $X$
(which counts the number of faces on $X$ of each dimension) but of the
flag vector (which counts chains of inclusions between faces).  In
1985 Bayer and Billera~\cite{baybil85:gen} found another partial
extension, which determines $g$ up to an invertible matrix.  Their
result is:

\begin{theorem}[Generalised Dehn-Sommerville]
The vector space $\mathcal{F}_d$ spanned by the flag vectors of all
$d$-dimensional convex polytopes has dimension the Fibonacci number
$F_{d+1}$.  The flag vectors $f(v(\point))$ are a basis for
$\mathcal{F}_d$, where $v$ ranges over the $F_{d+1}$ degree~$d$ words
in $C$ and $IC$, as defined below.
\end{theorem}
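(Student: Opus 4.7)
The plan is to establish the two inequalities $\dim \mathcal{F}_d \le F_{d+1}$ and $\dim \mathcal{F}_d \ge F_{d+1}$ separately. The upper bound I would obtain by writing down the generalised Dehn--Sommerville equations, satisfied by every flag vector of a convex polytope, and counting the dimension of their solution space. The lower bound I would obtain by exhibiting the $F_{d+1}$ specified vectors $f(v(\point))$ and verifying that they are linearly independent.

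For the upper bound, the starting point is that every closed interval $[F,G]$ in the face lattice of a convex polytope is itself the face lattice of a polytope of dimension $\dim G - \dim F - 1$, so the Euler relation holds on every such interval. Given a partial flag with a rank gap of at least $2$ between two consecutive faces, I would insert a face of each admissible intermediate rank, sum with alternating signs, and invoke Euler on the interval to produce a linear identity among flag numbers $f_S$. Collecting all such identities over all partial flags gives the full Bayer--Billera system. A combinatorial elimination argument then reduces every $f_S$ to a linear combination of those indexed by subsets $S \subseteq \{0,\dots,d-1\}$ avoiding a suitable ``two consecutive'' pattern; a direct enumeration shows there are exactly $F_{d+1}$ such sparse subsets.

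For the lower bound I would set up the operations $C$ (cone) and $IC$ so that each acts on a polytope by a construction of degree $1$ and degree $2$ respectively, with an explicit effect on the flag vector. Words in $C$ and $IC$ of total degree $d$ then correspond to compositions of $d$ into parts of size $1$ and $2$, so there are exactly $F_{d+1}$ of them, matching the upper bound. Linear independence of the flag vectors $f(v(\point))$ I would prove by induction on $d$: given a vanishing linear combination, partition its terms by the final letter of~$v$, and, by means of a component of the flag vector that distinguishes polytopes of the form $C(X)$ from those of the form $IC(Y)$, deduce that the two sub-sums must vanish separately; applying the inductive hypothesis in dimensions $d-1$ and $d-2$ then forces all coefficients to be zero.

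The hard part is supplying this distinguishing flag-vector component: one must analyse how $C$ and $IC$ modify the face lattice near top rank finely enough that their contributions to some fixed $f_S$ can be told apart, and without such clean separation the induction conflates the two terminal letters and fails. Once a suitable discriminating component is identified, both the combinatorial count of sparse subsets in the upper bound and the inductive step of the lower bound become routine, and the two halves combine to give the asserted equality.
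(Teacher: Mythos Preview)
The paper does not prove this theorem; it merely quotes it as the 1985 result of Bayer and Billera, with a citation to~\cite{baybil85:gen}, and then uses it as background for the $(C,D)$ basis. So there is no ``paper's own proof'' to compare your attempt against.

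That said, your outline is essentially the standard Bayer--Billera argument: the upper bound via the Euler relation on every interval of the face lattice, followed by a reduction to a Fibonacci-many family of sparse flag numbers, and the lower bound by exhibiting an explicit spanning family of polytopes built from two operators and proving linear independence by induction on $d$. Two remarks. First, as the paper itself notes, Bayer and Billera worked with the bipyramid operator $B$ (the polar of $I$) rather than $I$; the present paper rephrases their basis using $C$ and $IC$ because it adopts the simple/homology viewpoint. Second, the step you flag as ``the hard part'' --- isolating a flag component that separates the $C$-ending words from the $IC$-ending words so that the induction splits cleanly --- is exactly where the work lies in the original proof, and your sketch does not actually carry it out; you have correctly located the nontrivial point but have not yet supplied the distinguishing invariant. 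Until that component is named and its behaviour under $C$ and $IC$ is computed, the lower bound remains a plan rather than a proof.
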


\begin{definition}[$C$ and $I$]
Suppose $X$ is a convex polytope.  The \emph{cone} (or pyramid) $CX$
on $X$ is the convex hull of $X$ with a point, the \emph{apex}, not
lying in the affine linear span of $X$.  The \emph{cylinder} (or
prism) $IX$ on $X$ is the Cartesian product of an interval, say $[0,
  1]$, with $X$.
\end{definition}

Thus in dimension~$5$ the flag vector has $F_6 = 8$ independent
components while the $g$-vector has $\lfloor 5/2 \rfloor + 3 = 3$.  We
wish to extend $g$ so that it encode the whole of $f$.  Here are the
basic hypotheses made regarding the components $g_i$ of the extended
$g$-vector, besides being a linear function of the flag vector.  But
first we need a definition.

\begin{definition}[The $D$ operator]
The difference $IC-CC$ is the $D$ operator on convex polytopes.
\end{definition}

\begin{hypothesis}[Effective]
For all convex polytopes $X$ we have $g_i(X) \geq 0$.
\end{hypothesis}

\begin{hypothesis}[Zero-one on $(C,D)$ words]
$g_i(w(\point)) \in \{0, 1\}$ for all words $w$ in $C$ and $D$.
\end{hypothesis}

By generalised Dehn-Sommerville the flag vectors $f(w(\point))$ are a
basis for $\mathcal{F}_d$.  The second hypothesis now implies that
$g_i$ is determined by $s=\{w | g_i(w(\point)) = 1\}$.  For each $d$
there are finitely many (in fact $2^n$ where $n=F_{d+1}$)
possibilities for $s$ and thus $g_i$.  But if $g_i(X)< 0$ then $g_i$
fails the first hypothesis and so must be eliminated.  The next
hypothesis makes this elimination an explicit finite, but perhaps
large, calculation.

\begin{hypothesis}
The set $P_{01,d}$ of all $d$-dimensional $01$-polytopes and their
polars eliminate all impossible $g_i$.
\end{hypothesis}

Obtaining the $g$-vector from what whatever remains requires
additional steps that depend on precise concepts. The rest of this
paper is organised as follows.  After providing background and
notation we define and motivate the $(C,D)$ basis and the zero-one on
$(C,D)$ hypothesis.  In particular we discuss the already known
intersection homology of toric varieties, and a possible extension
that encodes the remainder of the flag vector.  Next we introduce
axioms to give a precise statement of our assumptions and conjectures,
and report on our calculations.  Finally we summarise and discuss some
open questions.

\section{Background}

This section establishes notation, provides some definitions and
provides other background.  Some important matters not relevant to the
core of this paper, such as the decomposition theorem, are left
unexplained.  Concepts used in only one section are generally not
placed here.

Throughout \emph{polytope} will mean convex polytope, $X$ will be a
polytope of dimension~$d$, $w$ a word of degree $d$ in $C$ and $D$ (as
defined in the Introduction) and $s$ will be a set of words~$w$ (also
known as a \emph{word-set}). Usually, $v$ is a word in $(C, IC)$ or in
$(C, I)$.

We will use $f(X)$ to denote the \emph{flag vector} of $X$ and $g(X)$
is our goal, the $g$-vector of $X$.  It has \emph{components} $g_i$.
Suppose $Y$ is a weighted formal sum $\sum \alpha_i X_i$ of polytopes.
By $f(Y)$ we will mean $\sum \alpha_if(X_i)$, and similarly for other
vector- and number-valued functions, such as $g$ and $g_i$.

We use $\lambda_w = \lambda_w(X)$ to denote coefficients in the
\emph{$(C, D)$-basis}.  (This should not be confused with the
$cd$-index of convex polytopes.)  We use $\lambda_s$ for $\sum_{w\in
  s}\lambda_w$.  We use $\deg_C w$ and $\deg_D w$ to denote the number
of $C$'s and $D$'s in $w$, and write $\deg w = \deg_C w + 2 \deg_D w$.
The \emph{order} of a word is the number of occurences of $CD$ in it.

Every polytope $X$ has a \emph{polar} $X^\vee$.  This reverses
inclusions on faces and so $f(X)$ and $f(X^\vee)$ are linear functions
of each other.  A polytope is \emph{simple} if its facets
(codimension~1 faces) are in general position, and \emph{simplicial}
if its vertices are in general position.  Simple and simplicial
correspond under polarisation.

Combinatorists usually start with simplicial polytopes but for
homology simple polytopes are a better starting point.  Here is an
example. The polar of the $I$ operator is the bipyramid $B$, and
\cite{baybil85:gen} used $B$ not $I$.  The $IC$ equation reduces $IC -
CC$ to a Cartesian product operator.  The polar of a Cartesian product
not easy to describe.  This paper takes the simple/homology point of
view.  For us, McMullen's conditions apply to simple polytopes, even
though he formulated them for simplicial polytopes.

From every rational polytope $X$ a toric variety $\mathbb{P}_X$ can be
constructed.  It has homology groups, which we denote by $H(X)$.  The
definition of $H(X)$ extends to all polytopes, rational or not. We
will index homology groups by complex dimension (as the others are
zero).  Homology will always mean middle perversity intersection
homology, which for simple polytopes is the same as ordinary homology.
In \S\ref{sect:higher} we hypothesise the existence of
\emph{higher-order} homology groups, and call the already known
part of $H(X)$ the \emph{order-zero} homology.

We use $[a,b,c]$ to denote the polynomial $a + bx + cx^2$, and think
of the order-zero part of $h(X)$ and $g(X)$ as polynomials in $x$.
For example, $h(X) = \sum x^i h_i(X)$.  This allows them to be
multiplied.  Thus, $h(\mathbb{P}_2)= [1, 1, 1]$ and $h(\mathbb{P}_2
\times \mathbb{P}_3) = [1, 1, 1][1, 1, 1, 1]$ by the K\"unneth
formula.

The \emph{hard Lefschetz theorem} says that the hyperplane class
$\omega\in H_{d-1}(X)$ induces an isomorphism $\omega^{j-i}:H_i(X) \to
H_j(X)$ when $i\geq j$ and $i+j=d$.  A class $\eta$ is
\emph{primitive} if $\omega^{j-1+1}(\eta) = 0$.

If $\mathcal{B}$ is a Boolean expression we let $(\mathcal{B})$ denote
$1$ if $\mathcal{B}$ is true and $0$ otherwise.  Thus, $\delta_{ij} =
(i = j)$ is the Kronecker delta and $(w \in s)$ is the characteristic
function of a set $s$.

Finally, some miscellaneous definitions and notations.  A $d$-cube, or
\emph{cube} for short, will mean a $d$-dimensional hypercube.  We do
not need in this paper the definition of the \emph{pseudo-power}
$n^{\langle i \rangle}$.  The interested reader can find it in
\cite{mcm71:num} or \cite{bayerlee94:aspects}.  We use $\lfloor x
\rfloor$ to denote the largest integer not greater than $x$.
Hypotheses, axioms and conjectures are numbered separately.
Everything else is numbered together.

%\newpage

\section{The $(C, D)$ basis}

According to generalised Dehn-Sommerville every polytope flag vector
has a unique expression as a linear combination of the flag vectors of
$v(\point)$ where $v$ is a word in $C$ and $IC$.  This applies in
particular to $v(\point)$ where $v$ is any word in $C$ and $I$ that is
\emph{not} a word in $C$ and $IC$.  This suggests that there is a
relationship between $I$ and $C$.  In fact~\cite{fine95:mvic}:

\begin{theorem}[The $IC$ equation]
As operators on flag vectors, $I$ and $C$ satisfy the equation
\begin{equation*}
  (IC - CC) \> I \> \equiv \> I \> (IC - CC)\>.
\end{equation*}
\end{theorem}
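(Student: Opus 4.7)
The approach I would take is direct verification on flag vectors. First I would write down how $C$ and $I$ act on flags. Every flag of $CX$ is specified by a flag of $X$ together with a ``split rank'' at which one passes from faces of $X$ to cones over faces of $X$, where the cone over the empty face is the apex. Every flag of $IX = X \times [0,1]$ is specified by a flag of $X$ together with a per-rank label chosen from three values indicating whether the face of $IX$ at that rank is $F \times \{0\}$, $F \times \{1\}$, or $F \times [0,1]$, subject to the constraint that once the third label appears it must persist at all higher ranks. These recipes give $f(CX)$ and $f(IX)$ as explicit linear functions of $f(X)$.

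Composing, I would express $f(ICIX)$, $f(CCIX)$, $f(IICX)$, and $f(ICCX)$ in terms of $f(X)$ by summing over decorated flags of $X$, and then check the identity in the equivalent form $f(ICIX) + f(ICCX) \equiv f(CCIX) + f(IICX)$ by matching the decorated-flag contributions term by term. To keep the bookkeeping transparent, I would encode each decorated flag as a non-commutative word in a small alphabet, with one letter per rank recording the ``type'' of face used, and translate $C$ and $I$ into simple substitution rules on these words. Under this encoding the difference $D = IC - CC$ becomes a highly localised correction near the top of a flag, so $DI$ and $ID$ differ only in the position of this correction relative to the prism substitution.

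The main obstacle is the combinatorial bookkeeping: the four triple compositions generate many classes of flags, with delicate edge cases such as flags that begin at the apex or whose prism labels reach the top, and matching them requires careful indexing. I expect the verification to reduce to a small finite rank-local check, essentially that inserting a prism label at an extra rank commutes with the $IC - CC$ correction, after which the full identity follows by linearity over the choice of underlying flag of $X$. If a more conceptual proof exists it is likely to exhibit the flags counted on the two sides as strata of a common hybrid polytope, with $IC - CC$ arising as a boundary contribution whose cancellation with the outer $I$ is manifest; I would look for such a realisation after completing the direct calculation as a sanity check.
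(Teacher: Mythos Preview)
Your approach would work in principle: the recipes you sketch for how $C$ and $I$ act on flags are essentially correct, and a term-by-term match of the four triple compositions is a valid, if laborious, proof strategy.

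The paper takes a much shorter, geometric route. The observation is that $CCX$ and $ICX$ each possess an \emph{apex edge} (for $CCX$, the segment joining the two apexes; for $ICX$, the prism over the apex of $CX$), and along their apex edges the two polytopes have identical local combinatorics. Hence $ICX - CCX \equiv Y - Z$ where $Y$ and $Z$ are the truncations along the apex edges; but these truncations are simply $(\text{square})\times X$ and $(\text{triangle})\times X$. Thus $D = IC - CC$ acts on flag vectors as Cartesian product with a fixed formal difference of two-dimensional polytopes. Since $I$ is also a Cartesian product operator and Cartesian products commute, $DI \equiv ID$ is immediate.

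What the paper's argument buys is not merely brevity: the identification of $D$ as a Cartesian product operator is reused throughout (for instance to get $h(DX) = [0,1,0]\,h(X)$ via K\"unneth), so it is worth internalising. Your direct verification would establish the identity but not reveal \emph{why} it holds. The ``common hybrid polytope'' heuristic you float at the end is close in spirit to the truncation argument, and pursuing it seriously would likely have led you to the same place.
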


\begin{proof}
Because it is central to the properties of $D$, we summarise the proof
given in~\cite{fine95:mvic}.  Let $X$ be a convex polytope.  By
definition $CCX$ has two apexes, but there is no geometric difference
between the first and the second.  Put another way, $CCX$ is the join
of $X$ with an interval and that interval is the \emph{apex edge} of
$CCX$.  Similarly, $ICX$ has an apex edge.  Along their apex edges
$CCX$ and $ICX$ have the same combinatorial structure and so
\begin{equation*}
  ICX - CCX \equiv Y - Z
\end{equation*}
where $Y$ and $Z$ are respectively the truncation of $ICX$ and $CCX$
along their apex edges.  However, $Y$ and $Z$ are respectively the
Cartesian product of a square and a triangle with $X$.
% (There is a joke here.  Draw and study a diagram of this until you can
% say ``I see''.)

Thus, as an operator on flag vectors, $IC - CC$ is a difference of
Cartesian products.  But $I$ is also a Cartesian product operator, and
such products commute.  The $IC$ equation follows.  In fact, the $IC$
equation together with $I(\point) = C(\point)$ (which follows from the
$IC$ equation applied to the empty set) allows any word in $I$ and $C$
applied to a point to be reduced to an equivalent combination of $C$
and $IC$ words applied to a point.
\end{proof}

This is one motivation for $D = IC - CC$.  The proof of the $IC$
equation establishes:

\begin{corollary}
For any $X$
\begin{equation*}
  DX \> \equiv \> (IC - CC) X \> \equiv \> (Y - Z) \times X
\end{equation*}
where $Y$ and $Z$ are respectively a square and a triangle.
\end{corollary}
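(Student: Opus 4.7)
The first equivalence, $DX \equiv (IC - CC)X$, is immediate from the definition of $D$. For the second equivalence, the plan is to extract the content of the proof of the IC equation above and reinterpret it in the form stated.

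That proof showed $ICX - CCX \equiv T_I - T_C$ as flag vectors, where $T_I$ denotes the truncation of $ICX$ along its apex edge and $T_C$ denotes the truncation of $CCX$ along its apex edge. I would then identify $T_I$ as $(\text{square}) \times X$ and $T_C$ as $(\text{triangle}) \times X$. For $T_I$, using $ICX = [0,1] \times CX$, the apex edge sits at $[0,1] \times \{\text{apex of }CX\}$; cutting parallel to it turns the cone $CX$ locally into the cylinder $[0,1] \times X$, yielding combinatorially $[0,1] \times [0,1] \times X = (\text{square}) \times X$. For $T_C$, viewing $CCX$ as the join of $X$ with its apex edge, the fibre above each point of $X$ is a triangle (the join of a point with an interval), and truncating the apex edge leaves a smaller triangle fibre over each point of $X$, yielding $(\text{triangle}) \times X$.

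Once these identifications are available, the remaining step is purely formal: since both truncations are Cartesian products with $X$, the common factor can be pulled outside the formal difference, giving $DX \equiv (\text{square} - \text{triangle}) \times X$, which with $Y = \text{square}$ and $Z = \text{triangle}$ is exactly the statement. The main conceptual work---matching the combinatorial structures of $ICX$ and $CCX$ along their apex edges---has already been done in the proof of the IC equation, so I do not expect a substantial new obstacle. The only residual care needed is to verify that the fibre structure over $X$ is genuinely uniform along the apex edge in each case, so that the truncation is an honest Cartesian product rather than a twisted bundle; this should follow directly from the product description $ICX = I \times CX$ and the join description $CCX = X \ast I$.
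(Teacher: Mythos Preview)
Your proposal is correct and follows essentially the same approach as the paper: the corollary is stated immediately after the proof of the $IC$ equation with the remark that ``the proof of the $IC$ equation establishes'' it, and that proof already records that the truncations of $ICX$ and $CCX$ along their apex edges are, respectively, $(\text{square})\times X$ and $(\text{triangle})\times X$. Your write-up simply supplies more geometric detail for those two identifications than the paper does, which is fine; the underlying argument is the same.
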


\begin{corollary}[The $(C, D)$ basis]
Every convex polytope $X$ has a unique representation
\begin{equation*}
  X \equiv \sum\nolimits_w \lambda_w w(\point)
\end{equation*}
where the sum is over all degree~$d$ words $w$ in $C$ and $D$ and the
coefficients $\lambda_w = \lambda_w(X)$ are linear functions of the
flag vector $f(X)$.
\end{corollary}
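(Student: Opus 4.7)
The plan is to leverage the operator identity $IC \equiv CC + D$ together with generalised Dehn-Sommerville to switch from the $(C, IC)$-basis to a basis indexed by $(C, D)$-words. Since $f(X) \in \mathcal{F}_d$ for every $d$-dimensional convex polytope $X$, exhibiting such a basis of $\mathcal{F}_d$ gives both existence and uniqueness of the expansion, and the coefficients $\lambda_w(X)$ are then coordinates of $f(X)$ in the new basis, hence linear functions of $f(X)$.

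First I would count the candidate basis. Writing $a_d$ for the number of degree-$d$ words in $C$ and $D$ and conditioning on the first letter (either $C$ of degree $1$, contributing $a_{d-1}$, or $D$ of degree $2$, contributing $a_{d-2}$), the recursion $a_d = a_{d-1} + a_{d-2}$ with $a_0 = a_1 = 1$ yields $a_d = F_{d+1} = \dim \mathcal{F}_d$. Because the candidate basis has the correct cardinality, it is enough to prove spanning.

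For spanning, the definition $D = IC - CC$ rearranges to the operator identity $IC \equiv CC + D$ on flag vectors. Given a degree-$d$ word $v$ in $C$ and $IC$, I would substitute $IC \to CC + D$ at every occurrence and expand by bilinearity of operator composition. This writes $v(\point)$ as a nonnegative integer combination of $w(\point)$ with $w$ a degree-$d$ word in $C$ and $D$; hence each $f(v(\point))$ lies in the span of the $f(w(\point))$. By generalised Dehn-Sommerville the $f(v(\point))$ already span $\mathcal{F}_d$, so the $f(w(\point))$ span $\mathcal{F}_d$ as well, and by the count they form a basis.

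The only point requiring care, and the nearest thing to a main obstacle, is justifying the substitution when $IC$ appears in the interior rather than at the front of a word: one needs $A \cdot IC \cdot B (\point) = A \cdot CC \cdot B (\point) + A \cdot D \cdot B (\point)$ for arbitrary subwords $A$ and $B$. This is immediate from the fact that $C$, $I$ and hence $D$ act as well-defined linear operators on flag vectors, so that composition is bilinear and the outer operator $A$ distributes over sums. Once this is granted, expanding $f(X)$ in the new basis yields the required unique coefficients $\lambda_w(X)$ depending linearly on $f(X)$.
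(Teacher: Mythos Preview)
Your argument is correct and follows exactly the route the paper intends: the corollary is stated without proof in the paper, but it is meant to follow immediately from generalised Dehn--Sommerville together with the substitution $IC = CC + D$, which is precisely what you carry out in detail (including the Fibonacci count and the check that the substitution is legitimate inside a word because the operators act linearly on flag vectors).
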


\begin{definition}[$CD$-vector]
The mapping $w \mapsto \lambda_w$ is called the \emph{$CD$ vector} of
$X$.  If $s$ is a set of $(C, D)$ words we will use $\lambda_s$ to
denote $\sum_{w\in s}\lambda_w$.
\end{definition}

The zero-one hypothesis implies that each $g_i$ is equal to
$\lambda_s$ for some set of words~$s$.

%\newpage
\section{Homology}

In this section we look at the already known homology.  In particular
we state formulas for Betti numbers in terms of the $(C, D)$ basis.
First simple polytopes.  The product of two simple polytopes is also
simple.  Thus if $X$ is simple then so is $DX$ (by which we mean that
its flag vector can be written as a formal sum of the flag vectors of
simple polytopes).  This follows because $DX$ is equivalent to $(Y -
Z)\times X$ for simple $Y$ and $Z$.

Clearly, $h(\point) = [1]$, $h(C(\point)) = [1, 1]$ and similarly
$h(C^j(\point)) = [1, \ldots, 1]$.  Similarly $h(Y) = [1,1,1]$, $h(Z)
= [1,2,1]$, and $h(DX) = [0, 1, 0]h(X)$.  Thus $h(D^iC^j(\point)) =
[0,1,0]^i[1, \ldots, 1]$.  Also, any simple-polytope $h$-vector is a
weighted sum of $D^iC^j(\point)$ $h$-vectors and so the formal sums
$D^iC^j(\point)$ provide a basis for the flag vectors of simple
polytopes.  This proves:

\begin{proposition}
Suppose $X$ is a simple polytope.  Then
$ %\begin{equation*}
  X \equiv \sum\nolimits _{i=0}^{\lfloor d/2 \rfloor} \> \lambda_i D^iC^{d-2i}(\point)
$ %\end{equation*}
where $\lambda_i = g_i(X)$.
% [Don't display?]
\end{proposition}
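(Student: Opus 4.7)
The plan is to reduce the claim to a statement about $h$-vectors and then identify coefficients. The preceding discussion already records the key computation $h(D^i C^{d-2i}(\point)) = [0,1,0]^i \cdot [1,\ldots,1]$, where the second factor has $d-2i+1$ entries, and asserts that simple polytope flag vectors are spanned by the $D^i C^j(\point)$. So my strategy is to make both the basis claim and the coefficient identification precise.

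First I would expand the $h$-polynomial: $h(D^i C^{d-2i}(\point)) = x^i(1 + x + \cdots + x^{d-2i}) = x^i + x^{i+1} + \cdots + x^{d-i}$. Since these $\lfloor d/2 \rfloor + 1$ polynomials have strictly increasing "left edges" $x^i$ (the lowest power of $x$ appearing), they are linearly independent. They are also palindromic of degree $d$, and the space of palindromic polynomials of degree $d$ has dimension exactly $\lfloor d/2 \rfloor + 1$; thus they form a basis for that space. Since the $h$-vector of any simple $d$-polytope is a palindromic polynomial of degree $d$, and (as noted in the paper) the flag vector of a simple polytope is determined by its $h$-vector, we may write $X \equiv \sum_{i=0}^{\lfloor d/2 \rfloor} \lambda_i D^i C^{d-2i}(\point)$ uniquely as an equation of flag vectors, with coefficients $\lambda_i$ linear in $f(X)$.

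To identify $\lambda_i$ with $g_i(X)$, I would apply $h$ to both sides. The coefficient of $x^k$ in $h(X)$ becomes
\begin{equation*}
h_k(X) = \sum_{i : i \leq k \leq d-i} \lambda_i = \sum_{i=0}^{\min(k, d-k)} \lambda_i.
\end{equation*}
For $k \leq d/2$ this simplifies to $h_k(X) = \lambda_0 + \lambda_1 + \cdots + \lambda_k$, so by telescoping $\lambda_k = h_k(X) - h_{k-1}(X) = g_k(X)$.

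The substantive steps are essentially already justified in the preceding text, so I do not expect a serious obstacle. The only point that requires a little care is the claim that a simple polytope's flag vector is recovered from its $h$-vector (or equivalently, the claim that the formal sums $D^i C^{d-2i}(\point)$ already span \emph{flag vectors}, not merely $h$-vectors, of simple polytopes); this is what the paragraph before the proposition invokes when it says "any simple-polytope $h$-vector is a weighted sum\ldots and so the formal sums\ldots provide a basis." I would flag this as the one place where a reader might want an extra line of justification, pointing to the fact that for simple polytopes the flag vector is determined by the face vector, which in turn is determined by the $h$-vector.
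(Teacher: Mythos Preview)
Your proposal is correct and follows essentially the same approach as the paper: compute $h(D^iC^{d-2i}(\point))$, observe that these span the palindromic $h$-vectors, pass to flag vectors, and read off the coefficients. You are more explicit than the paper in two respects --- you spell out the linear-independence and dimension count for the palindromic basis, and you carry out the telescoping identification $\lambda_k = h_k - h_{k-1} = g_k$ that the paper leaves implicit --- and you rightly flag the passage from $h$-vectors to flag vectors as the one step wanting justification; the paper makes exactly this leap without comment.
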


For general polytopes it follows from the known formula for
intersection homology Betti numbers~\cite{sta87:gen} that $h(IX) = [1,
  1] h(X)$.  It also follows that for example if $h(X) = [a, b, c, b,
  a]$ then $h(CX) = [a, b, c, c, b, a]$.  The general rule for $C$ is
to repeat the middle term (for $d$ even) or the next to middle term
(for $d$ odd).  If $d$ is odd then the two next-to-middle terms are
equal.  Thus, if $h(X) = [a, b, b, a]$ then $h(CX) = [a, b, b, b, a]$.

From this $h(DX) = [0, 1, 0] h(X)$ follows easily.  (Alternatively,
the result follows from $D=Y-Z$ and the K\"unneth formula.)  We now
turn to the $g$-vector.  The rule for $h(CX)$ then becomes $g(CX) =
g(X)$ while the rule for $D$ is $g(DX) = [0, 1, 0]g(X)$.  Hence we
have the following extension of the previous proposition:

\begin{proposition}
Let $w$ be a word in $C$ and $D$.  Then
\begin{equation*}
  g_i(w(pt)) = (\deg_D w = i)
\end{equation*}
and by generalised Dehn-Sommerville this equation determines $g_i$ on
all convex polytope flag vectors.
\end{proposition}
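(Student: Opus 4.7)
The plan is to combine two ingredients that are already on the table: (i) the behaviour of $h$, and hence of $g$, under the operators $C$ and $D$, which the preceding discussion has essentially worked out, and (ii) the $(C,D)$ basis corollary. Step (i) gives the stated formula on basis elements, and step (ii) propagates it to all flag vectors by linearity.

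First I would record the two ``transfer'' rules. From the rule $h(CX)$ obtained from $h(X)$ by duplicating the middle (or next-to-middle) term, a direct computation of $g_i = h_i - h_{i-1}$ in the range $0 \leq 2i \leq \dim X$ gives $g_i(CX) = g_i(X)$; the duplication precisely cancels the shift in index caused by $\dim CX = \dim X + 1$. From $h(DX) = [0,1,0]\,h(X)$, i.e.\ $h_i(DX) = h_{i-1}(X)$, an analogous computation gives $g_i(DX) = g_{i-1}(X)$, with $g_0(DX)=0$. Both identities hold throughout the meaningful range of indices because $g_i$ is defined as a linear function of $f$ and the rules above are identities of flag vectors.

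Second, I would induct on the length of $w$. The base case is $g_i(\point) = (i = 0)$, which is immediate from $h(\point) = [1]$. For the inductive step, if $w = Cw'$ then by the $C$-rule $g_i(w(\point)) = g_i(w'(\point)) = (\deg_D w' = i) = (\deg_D w = i)$, and if $w = Dw'$ then by the $D$-rule $g_i(w(\point)) = g_{i-1}(w'(\point)) = (\deg_D w' = i - 1) = (\deg_D w = i)$. (The cases where $w$ begins on the right with $C$ or $D$ give the same conclusion, since the product rules for $h$, and hence for $g$, do not depend on which side the new factor is adjoined; alternatively one can strip letters from the left using the same Künneth-style identities.) This establishes the displayed equation for every $(C,D)$ word~$w$.

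Finally, the $(C,D)$ basis corollary asserts that every convex polytope flag vector is a unique linear combination of the flag vectors $f(w(\point))$ as $w$ ranges over degree-$d$ words in $C$ and $D$. Since $g_i$ is a linear function of $f$, its values on this basis determine it everywhere, which is the second assertion. I do not expect any genuine obstacle here: the only delicate point is bookkeeping the ranges of indices so that the identity $g_i = h_i - h_{i-1}$ is applied only where the order-zero $g$ is defined, and checking that the $C$- and $D$-rules on $h$ translate correctly after the dimension-shift. The substantive input, namely Stanley's formula underlying $h(IX)=[1,1]h(X)$ and the behaviour of $h$ under $C$, is cited from the preceding discussion rather than reproved.
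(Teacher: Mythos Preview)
Your argument is correct and follows the same route as the paper: derive the transfer rules $g(CX)=g(X)$ and $g(DX)=[0,1,0]\,g(X)$ from the stated behaviour of $h$ under $C$ and $D$, iterate from $g(\point)=[1]$ to get $g_i(w(\point))=(\deg_D w=i)$, and then invoke the $(C,D)$ basis to determine $g_i$ on all flag vectors. The only difference is that you spell out the induction on the length of $w$ explicitly, whereas the paper leaves this implicit in the word ``Hence''.
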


\begin{corollary}
The known components $g_i$ of $g$ satisfy the zero-one on $(C,D)$
hypothesis.
\end{corollary}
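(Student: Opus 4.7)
The plan is to observe that this corollary is essentially immediate from the proposition that directly precedes it, so the work has already been done. The preceding proposition asserts that for any word $w$ in $C$ and $D$,
\begin{equation*}
  g_i(w(\point)) = (\deg_D w = i).
\end{equation*}
The only thing left to invoke is the convention, fixed in the Background section, that the bracket $(\mathcal{B})$ of a Boolean $\mathcal{B}$ equals $1$ when $\mathcal{B}$ is true and $0$ otherwise. Therefore the right-hand side of the displayed formula lies in $\{0,1\}$ for every choice of $i$ and every $(C,D)$-word $w$.

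Written out, the proof would be a single sentence: by the proposition, $g_i(w(\point)) = (\deg_D w = i) \in \{0,1\}$, which is precisely the statement of the zero-one on $(C,D)$ hypothesis. There is no obstacle to overcome; the content of the corollary lies entirely in the proposition that establishes the explicit formula, which in turn rested on the multiplicative rules $g(CX) = g(X)$ and $g(DX) = [0,1,0]\,g(X)$ combined with generalised Dehn--Sommerville. The corollary simply records that the output of that formula is a characteristic function, and so takes values in $\{0,1\}$ by fiat.
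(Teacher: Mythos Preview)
Your proposal is correct and matches the paper's approach exactly: the corollary is stated without proof in the paper because it is immediate from the preceding proposition, and you have identified precisely why --- the Iverson bracket $(\deg_D w = i)$ takes values in $\{0,1\}$ by the convention fixed in the Background section.
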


The proof that $g_i(X) \geq 0$ is deep, and relies on the
decomposition theorem~\cite{dcml09:decomp}.

%\newpage
\section{Higher-order homology}
\label{sect:higher}

This section assumes some knowledge of intersection homology and the
topology of toric varieties.  It provides both justification for the
axioms and an interpretation of the outcome of calculations that
follow.

For simple polytopes the homology $H(X)$ is a ring generated by the
facets and from this the pseudo-power inequalities follow.  For
general polytopes $H(X)$ no longer has a ring structure.  Our distant
goal requires additional components in $H(X)$, which we will call the
\emph{higher-order homology}.  The already known part we call the
\emph{order-zero homology}.

The main properties of higher-order homology are: (1)~it should vanish
when $X$ is simple, (2)~when it vanishes there is a ring structure on
$H(X)$, and (3)~its Betti numbers should be a linear function
of the flag vector.

The order-zero homology can be constructed by taking cycles and
relations whose intersection with the strata satisfy what are called
perversity conditions.  This allows Poincar\'e duality and hard
Lefschetz to hold (provided we use middle perversity).  It also allows
non-trivial local cycles to exist.  For example there is a local cycle
at each vertex of the octohedron (the polar of a $3$-cube).

The author expects the cycles and relations for higher-order homology
to be subsets of those for order-zero homology, obtained by imposing
locality conditions.  In other words, fewer generators and fewer
relations.  For example, on the octohedron there are $8$ local
$1$-cycles which under local equivalence are independent.  However,
there are $4$ independent global relations among these local
$1$-cycles.

If $X=v(\point)$ for $v$ a word in $(C,I)$ then the stratification is
given by a single flag on $X$.  This has consequences.  Suppose $\eta$
and $\psi$ are cycles close to a stratum $X_i$.  If $\eta$ and $\psi$
are globally equivalent then it seems that this relation can be
deformed so that it too is close to $X_i$.  In other words, on $X=
v(\point)$ local and global equivalence are the same.  This, of
course, is not true in general.

Further, if $IC = D + CC$ is used to rewrite $v$ as a weighted sum of
words in $(C, D)$ then the weights count the primitive cycles on $X$
with certain locality properties.  Thus, each locality property gives
rise to a zero-one function on the $(C,D)$ basis.  (We will see,
however, that for some plausible locality properties the resulting
linear function will be negative on some polytope.)  To summarise what
this paper needs from this section:

\begin{hypothesis}
The higher-order homology vanishes on simple polytopes.
\end{hypothesis}

\begin{hypothesis}
Each higher-order homology group is defined by using a locality
property.
\end{hypothesis}

\begin{hypothesis}
Each locality property induces a zero-one function on the $(C,D)$
basis.
\end{hypothesis}

%\newpage

\section{Axioms for the $g$-vector}

The previously stated hypotheses have consequences for the $g$-vector,
which we will call axioms.  The calculation is based on the axioms.
Recall that $X$ is any convex polytope, that $d$ is its dimension,
that $w$ is a word of degree~$d$ in $(C, D)$, and that $s$ is a set of
words $w$.  Here are the axioms.

\begin{axiom}[Linearity]
$g(X)$ is a linear function of the flag vector $f(X)$.
\end{axiom}

\begin{axiom}[Components]
$g(X)$ is a map $i \mapsto g_i(X)$ from $I$ to $\mathbb{Z}$, for $i$
  in some index set~$I = I_d$.
\end{axiom}

Each $g_i$ is called a \emph{component} of $g$.

\begin{axiom}[Non-negative]
\label{axiom:nonnegative}
$g_i(X)\geq 0$ for all $i\in I$ and all $X$.
\end{axiom}

The next axiom is crucial.  It implies that $g$ is given by a choice
from a known and finite set.  Its motivation comes from the three
previous sections.  It depends upon the generalised Dehn-Sommerville
equations.

\begin{axiom}[Zero-one on $(C, D)$ words]
For each $i$ in $I$ we have $g_i = \lambda_s$ for some set of words
$s$.
\end{axiom}

Thus, we can take the index set $I$ to be a subset of the power set of
all degree~$d$ words in $C$ and $D$. For example, for $d=5$ we have
$F_{5+1} = 8$ and so the power set has $2^8 = 256$ elements.  Thus,
\emph{a priori} there are $2^{256}$ possible values for $I$.  From now
on we will write $s \in S$ instead of $i \in I$.  The next tasks are
to remove first the impossible and then the redundant.
Axiom~\ref{axiom:nonnegative} provides some conditions.

\begin{definition}
Say that $s$ is \emph{effective} if $\lambda_s(X) \geq 0$ for all $X$.
\end{definition}

Let $E = E_d$ denote the set of all effective $s$ (of degree $d$).
Now for the redundant.  Clearly, if nonempty disjoint word-sets $s$
and $s'$ are both effective then so is their union.  Such unions
provide no new conditions on $f(X)$ and so are excluded from $g$.
More generally:

\begin{definition}[Extremal]
Say that $s$ in $E_d$ is \emph{extremal} if it cannot be expressed as
a weighted sum $s = \sum_{t\neq s} \alpha_t t$ of other elements of
$E_d$ with non-negative weights $\alpha_t$. (The empty set is not
extremal.)
\end{definition}

\begin{axiom}[Extremal]
Each element of $s$ of $S$ are extremal in $E$.
\end{axiom}

The $g$-vector should embrace both zero-order and all possible
higher-order homology.

\begin{axiom}[Order-zero homology]
The $g_i$ corresponding to order-zero homology are components of $g$.
\end{axiom}

\begin{axiom}[Higher-order homology]
If $s$ is effective, extremal, and $\lambda_s$ vanishes on simple
polytopes, then $s \in S$.
\end{axiom}

Finally, we want $g$ to be complete and without redundancy.

\begin{axiom}[Basis]
The components of $g$ are a basis for all linear functions of the flag
vector $f$.
\end{axiom}

%\newpage

\section{Conjectures}

In this section we state two conjectures.  If both are true then for
each dimension $d$ a finite calculation will give a formula for the
$g$-vector in that dimension, and hence linear inequalities
$g_s(X)\geq 0$. The first conjecture is, of course:

\begin{conjecture}
There is a $g$-vector that satisfies the axioms stated in the
previous section.
\end{conjecture}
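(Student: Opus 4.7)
The plan is to construct, for each dimension $d$, the index set $S$ of word-sets directly from the axioms themselves and then verify that the resulting family $\{\lambda_s\}_{s \in S}$ forms a basis of the dual space to $\mathcal{F}_d$. The construction proceeds in three stages: first enumerate the collection $E_d$ of effective word-sets, then extract its extremal elements, and finally select those mandated by the Order-zero Homology and Higher-order Homology axioms.

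The first stage is to compute $E_d$. A priori, verifying $\lambda_s(X) \geq 0$ for every convex polytope $X$ is an infinite problem. To make it finite, I would invoke Hypothesis~3: non-negativity against the finite test set $P_{01,d}$ of $d$-dimensional $01$-polytopes and their polars suffices. Since each $\lambda_s$ is a fixed linear function of the flag vector and $|P_{01,d}|$ is finite, this reduces the identification of $E_d$ to testing a finite system of linear inequalities, yielding $E_d$ as a concrete subset of $2^{W_d}$ where $W_d$ is the set of degree-$d$ words in $C$ and $D$. The second stage extracts the extremal elements: viewing each $s$ as a characteristic vector in $\{0,1\}^{W_d}$, one tests for each $s \in E_d$ whether it lies in the non-negative conic hull of the remaining elements. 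The Order-zero components are explicit by the proposition $g_i(w(\point)) = (\deg_D w = i)$; they are zero-one on $(C,D)$ words and non-negative by hard Lefschetz, and their extremality in $E_d$ is to be verified by showing each picks out a distinct $\deg_D$ stratum that cannot be assembled from competitors. Adjoining every remaining extremal $s$ whose $\lambda_s$ vanishes on simple polytopes produces a candidate $S$ that manifestly satisfies all axioms except possibly the Basis Axiom.

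The principal obstacle is then the Basis Axiom: one must prove that $\{\lambda_s\}_{s \in S}$ is linearly independent and spans the flag-vector dual space, which by generalised Dehn--Sommerville has dimension $F_{d+1}$. Linear independence should be tractable, since it can be certified by evaluation on the $(C,D)$-basis vectors $w(\point)$. Spanning is the deep issue: nothing in the earlier stages guarantees that the combination of extremality, effectivity, and the order-zero plus vanishing-on-simple criterion produces \emph{exactly} $F_{d+1}$ independent functionals rather than fewer. Low-dimensional verification -- which the author reports through $d=5$ -- certifies the count there, but a general argument presumably requires identifying each candidate higher-order component with the Betti number of a specific higher-order homology group whose generators are indexed by a genuinely new set of locality classes, and then matching those classes bijectively with the flag-vector degrees of freedom not already accounted for by order-zero homology. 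Producing this bijection -- in effect, a combinatorial realisation of the hypothesised higher-order homology compatible with the $IC$ equation -- is the crux of the conjecture and the principal difficulty I would expect to encounter.
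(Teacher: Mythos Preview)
The statement is a \emph{conjecture}; the paper offers no proof of it, only computational evidence for $d=5$. So there is no paper proof to compare against. Your proposal is not a proof either --- it is a programme, and you yourself flag its central gap at the end. That said, your programme mirrors almost exactly what the paper does for $d=5$: compute $E(P_{01,d})$, extract the extremals, pick out the order-zero components and the extremals vanishing on simple polytopes, and then check that the result is a basis by exhibiting an upper-triangular structure.

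Two genuine obstacles remain in your outline, and they are the same ones the paper leaves open. First, to make the determination of $E_d$ finite you invoke Hypothesis~3, which is precisely the paper's Conjecture~2 (that $P_{01,d}$ is broad). This is itself unproven, so your first stage already assumes a separate open conjecture; without it you have no finite description of $E_d$ at all. Second, as you correctly identify, nothing in the construction forces the selected functionals to number exactly $F_{d+1}$ and be independent. The paper verifies this for $d=5$ by inspecting the explicit list and observing an upper-triangular pattern; it gives no mechanism for general $d$, and your suggested route via a bijection with higher-order homology classes is exactly the hypothesised structure the paper says would be needed, not something established. In short, your write-up is an accurate diagnosis of why the conjecture is open rather than a proof of it.
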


The set $E=E_d$ of effective word-sets $s$ of degree~$d$ is central to
this conjecture.  The $g$-vector follows once we have $E$.  To show
that $s$ is \emph{not} in $E$ it is enough to produce an $X$ such that
$g_s(X)<0$.  Therefore, a finite set $P$ of test polytopes is by
elimination enough to determine $E$.  We will make this more formal.

\begin{definition}[Effective word-sets]
Let $P$ be a set of dimension~$d$ polytopes.  Define $E(P)$, the
\emph{word-sets effective on $P$}, to be the set of $s$ such that
$\lambda_s(X)\geq 0$ for every $X$ in $P$.
\end{definition}

\begin{definition}[Broad set of polytopes]
Say that a set $P$ of polytopes is \emph{broad} if $E(P) = E$, or in
other words that $\lambda_s(X) \geq 0$ for $X\in P$ implies
$\lambda_s(X)\geq 0$ for all $X$.
\end{definition}

\begin{proposition}
For each dimension $d$ there is a broad and finite set $P$ of
$d$-polytopes.
\end{proposition}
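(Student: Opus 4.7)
The plan is to exploit finiteness of the collection of candidate word-sets and to pick one witness polytope for each word-set that fails to be effective; the whole argument is essentially a pigeonhole trick with no combinatorial content beyond the definitions. First I would observe that by generalised Dehn-Sommerville the number of degree-$d$ words in $C$ and $D$ equals the Fibonacci number $F_{d+1}$, so the collection of all word-sets $s$ of degree $d$ has cardinality $2^{F_{d+1}}$, which is finite. In particular the subcollection $W := \{\, s : s \notin E_d \,\}$ of non-effective word-sets is finite.

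Next, for each $s \in W$ I would invoke the definition of $E_d$ to extract a witnessing convex $d$-polytope $X_s$ with $\lambda_s(X_s) < 0$. Setting $P := \{\, X_s : s \in W \,\}$ then produces a finite set of $d$-polytopes. To verify that $P$ is broad, note that the inclusion $E \subseteq E(P)$ is immediate from the definitions, since a word-set effective on all polytopes is in particular effective on those lying in $P$. Conversely, if $s \notin E$ then $s \in W$ and $X_s \in P$ satisfies $\lambda_s(X_s) < 0$, so $s \notin E(P)$. Hence $E(P) = E$, as required.

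The only real ``obstacle'' is recognising that there is no obstacle. The proposition is a purely set-theoretic finiteness statement, following automatically from the fact that only finitely many word-sets exist in each dimension. The content-rich problem --- exhibiting an \emph{explicit} small broad set, for instance the $01$-polytopes and their polars --- is much stronger, and is the subject of Hypothesis~3 and of the subsequent computer calculation in dimension~$5$; it is not addressed by this proposition.
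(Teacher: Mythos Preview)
Your argument is correct and matches the paper's own reasoning: the paper does not spell out a proof but immediately remarks that in dimension~$5$ there is a broad $P$ with at most $2^8 = 256$ polytopes, which is exactly your bound $2^{F_{d+1}}$ coming from one witness per non-effective word-set. You have simply made explicit what the paper leaves implicit.
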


In dimension~$5$ there is a broad $P$ with at most $2^8 = 256$
polytopes.  However, we don't know what it is.  The second conjecture,
if true, provides an explicit broad set $P$.  There is at present
little evidence for it, other than the satisfactory output produced by
the $d=5$ calculation, which is the subject of the next two sections.
The set $P$ is produced from subsets of the vertices of a $d$-cube,
together with polars.  A cube can be represented so that each vertex
component is either $0$ or $1$.  We repeat the standard definition:

\begin{definition}
A \emph{$01$-polytope} is the convex hull of a subset of the vertices
of a cube.
\end{definition}

A cube has $2^d$ vertices and $2d$ facets.  The polar of a cube is the
\emph{cross-polytope}, the convex hull of the $d$ basis vectors $e_i$
and their negatives $-e_i$.  The cross-polytope has $2^d$ facets
(namely a choice of which $e_i$ are to have a negative sign).  Every
convex polytope has a \emph{polar $X^\vee$} (which depends on the
choice of a point $p$ in the interior of $X$).  Each $i$-face on $X$
corresponds to a $d-i-1$ face of $X^\vee$ and vice versa.  This
bijection reverses inclusions and so the flag vector $f(X^\vee)$ of
the polar is a linear function of $f(X)$ (and does not depend on the
choice of $p$).

\begin{definition}
Let $P_{01,d}$ denote the set of all $d$-dimensional $01$-polytopes,
together with their polars.
\end{definition}

A $01$-polytope need not have the same dimension as its cube.  But if
of smaller dimension then there is a projection onto a face that gives
an affinely equivalent polytope (which thus has the same flag vector).
So without loss of generality we need only consider $X$ and $X^\vee$
obtained from the $d$-cube.

\begin{conjecture}[$P_{01,d}$ is broad]
The set $P_{01,d}$ of $d$-dimensional $01$-polytopes and their polars
is broad.
\end{conjecture}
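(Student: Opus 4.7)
The plan is to reformulate the conjecture as an equality of convex cones and attempt to prove the non-trivial direction by an explicit decomposition argument. Let $C_d \subseteq \mathcal{F}_d$ denote the closed convex cone generated by $\{f(X) : X \text{ a convex } d\text{-polytope}\}$ and let $C_{01,d}$ denote the closed convex cone generated by $\{f(X) : X \in P_{01,d}\}$. Each word-set $s$ defines a linear functional $\lambda_s$ on $\mathcal{F}_d$, and effectivity on a family of polytopes is by definition non-negativity on the corresponding cone, so by convex duality the conjecture $E(P_{01,d}) = E$ is equivalent to $C_d = C_{01,d}$. The inclusion $C_{01,d} \subseteq C_d$ is automatic, so the real content is to show $C_d \subseteq C_{01,d}$: that every convex-polytope flag vector is a non-negative real combination of flag vectors of $01$-polytopes and their polars.

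I would pursue two complementary approaches. The first is operator-theoretic and inductive: identify stable operations that preserve membership in $C_{01,d}$ and that generate, together with lower dimensions, enough polytopes to exhaust $C_d$. The $C$ and $I$ operators are natural candidates, since the $d$-cube is $I^d(\point)$ and one can ask whether $CX$ and $IX$ lie in $C_{01,d+1}$ whenever $X \in C_{01,d}$; combined with generalised Dehn-Sommerville, a positive answer plus the inclusion of enough low-dimensional seed polytopes would close the argument. Closure under polarity is automatic from the definition of $P_{01,d}$. The second approach is finite and computational: in any fixed dimension, $P_{01,d}$ has only finitely many combinatorial types, so one may enumerate the extreme rays of $C_{01,d}$ directly and then verify, ray by ray, that each is realised as a limit of convex-polytope flag vectors. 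This second approach is what the paper implicitly carries out for $d=5$.

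The main obstacle is that the cone $C_d$ is itself poorly understood in general, and there is no known structural reason why the flag vector of an arbitrary convex polytope should decompose non-negatively in terms of $01$-polytope flag vectors. The operator-based inductive strategy runs into the difficulty that $CX$ and $IX$ need not themselves be $01$-polytopes even when $X$ is, so one must work at the level of the cone $C_{01,d}$ rather than the generating set, and checking $C$- and $I$-closure of this cone already appears to require the conjecture (or something close to it) in lower dimensions. The extreme-ray approach avoids this but gives only a dimension-by-dimension verification rather than a uniform proof. For these reasons I expect the honest outcome of this plan to be, as in the paper, a rigorous proof in low dimensions and strong circumstantial evidence in general; a uniform proof likely requires a new geometric input, perhaps a dissection or sweep argument that realises an arbitrary convex polytope's flag vector as a flag-vector-positive combination of pieces that are combinatorially $01$.
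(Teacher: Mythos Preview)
The statement you are addressing is a \emph{conjecture}; the paper offers no proof and explicitly says there is ``at present little evidence for it, other than the satisfactory output produced by the $d=5$ calculation.'' So there is no paper-proof to compare against, and your proposal must be judged on its own merits.

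There are two concrete gaps.

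\textbf{The duality reformulation is not an equivalence.} You assert that $E(P_{01,d})=E$ is equivalent, by convex duality, to $C_{01,d}=C_d$. One direction is fine: if the cones coincide, so do the sets of non-negative $0$--$1$ functionals. But the converse fails in general. The sets $E$ and $E(P_{01,d})$ record only which of the $2^{F_{d+1}}$ functionals $\lambda_s$ (those with $0$--$1$ coefficients in the $(C,D)$ dual basis) lie in the respective dual cones; two distinct closed cones can intersect the finite set $\{\lambda_s\}$ identically while having different full dual cones. Thus $C_{01,d}=C_d$ is a strict strengthening of the conjecture, not a restatement of it. Proving the stronger statement would of course suffice, but you should not claim equivalence.

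\textbf{The extreme-ray approach, as written, proves only the trivial inclusion.} You propose to ``enumerate the extreme rays of $C_{01,d}$ directly and then verify, ray by ray, that each is realised as a limit of convex-polytope flag vectors.'' But every extreme ray of $C_{01,d}$ is already spanned by the flag vector of a $01$-polytope or a polar thereof, hence of a convex polytope; this shows $C_{01,d}\subseteq C_d$, which is automatic. The hard direction requires the opposite: either enumerating extreme rays of $C_d$ (unknown---this is essentially the flag-vector characterisation problem) and placing each in $C_{01,d}$, or enumerating extreme rays of the dual cone $C_{01,d}^{*}$ and proving each is non-negative on \emph{all} convex polytopes. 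The paper's $d=5$ computation does neither: it computes $E(P_{01,5})$ and its extremal elements, but it does \emph{not} verify that the resulting $\lambda_s$ are non-negative on all $5$-polytopes (indeed, for the five ``higher-order'' components this is left open). So your remark that the paper gives ``a rigorous proof in low dimensions'' is a misreading; even $d=5$ remains conjectural.

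Your closing assessment---that a uniform proof would need genuinely new geometric input---is accurate, but the two approaches you sketch do not, as they stand, yield even a dimension-by-dimension verification.
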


%\newpage

\section{The $d=5$ calculation}

In the previous section we conjectured that certain calculations would
give a formula for the $g$-vector.  In this section and the next we
report on some such calculations.  We applied
\texttt{polymake}~\cite{polymake} to a list of $01$-polytopes for
$d=5$ supplied by Aichholzer~\cite{aich10:personal}.  This gave us a
list of flag vectors.  There are 1,226,525 such polytopes (up to cube
symmetries) and the calculation took about 15 GHz days.  Together with
their polars this gives 688,298 distinct flag vectors.

We also wrote software~\cite{fine10:bbpoly} to compute the $(C,D)$
vector for each of these flag vectors, and also to determine for each
of the $256 = 2^8$ subsets of the $F_{5+1} = 8$ degree~$5$ words $s$
in $(C, D)$ whether or not the sum of the corresponding coefficients
in the $(C,D)$ vector was non-negative for all these flag vectors.
This gives us $E(P_{01,d})$, as defined in the previous section.

Finally, we used \texttt{polymake} again to determine the extremal $s$
in $E(P_{01,d})$.  Altogether there were $13$ such, of which we are
expecting $8$ to be components of the $g$-vector.  The meaning of the
remaining $5$ is less clear.  The next two results give the
conjectured components of the $g$-vector.

\begin{proposition}
For $P_{01,5}$ the order-zero $g_i$ (in the form `index $=$ deg :
word-set')
\begin{align*}
11111 = 0 &: CCCCC\\
2111 = 1 &: DCCC,\  CDCC,\ CCDC,\ CCCD\\
221 = 2 &: DDC,\ DCD,\ CDD
\end{align*}
are effective (this is already known) and extremal.
\end{proposition}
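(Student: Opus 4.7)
The plan is to establish effectiveness via the already-known non-negativity of the order-zero $g$-vector, and to establish extremality via a support argument that reduces the problem to a finite feasibility check, certified by the \texttt{polymake} output.

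For effectiveness: each listed $s_i$ consists exactly of the degree-$5$ words $w$ in $(C,D)$ with $\deg_D w = i$. By the earlier proposition extending the simple-polytope formula through generalised Dehn-Sommerville, $g_i(w(\point)) = (\deg_D w = i) = \lambda_{s_i}(w(\point))$ on every basis element $w(\point)$. Since the flag vectors $f(w(\point))$ form a basis of $\mathcal{F}_5$, the linear functionals $\lambda_{s_i}$ and $g_i$ coincide on all polytope flag vectors. The cited non-negativity $g_i(X) \geq 0$ (via the decomposition theorem) then places $s_i$ in $E_5$.

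For extremality, the first step is a support argument. Assume $s_i = \sum_{t \neq s_i} \alpha_t \, t$ with $\alpha_t \geq 0$ and each $t \in E_5$. Evaluating the characteristic-function identity at any word $w \notin s_i$ gives $\sum_{t \ni w} \alpha_t = 0$, forcing $\alpha_t = 0$ for every $t$ that contains such a $w$. Hence every $t$ with $\alpha_t > 0$ satisfies $t \subseteq s_i$, and since $t \neq s_i$ it is a proper subset. For $s_0 = \{CCCCC\}$ the only such proper subset is $\emptyset$, which contributes nothing, so no decomposition exists and $s_0$ is extremal immediately. For $s_1$ and $s_2$ the task reduces to a finite LP-feasibility question: can the all-ones vector indexed by $s_i$ be written as a non-negative combination of characteristic vectors of non-empty proper subsets of $s_i$ that themselves lie in $E_5$?

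The main obstacle, and the main content of the $d=5$ calculation, is certifying infeasibility of these LPs. The strategy is: for each candidate proper subset $t$ of $s_i$, scan $P_{01,5}$ for a test polytope $X$ with $\lambda_t(X) < 0$, which eliminates $t$ from $E_5$; the handful of surviving effective proper subsets are then checked by linear programming against the all-ones right-hand side. In practice this is exactly what the \texttt{polymake} extraction of extremal rays of the cone $E(P_{01,5})$ performs, and the proposition records that the three $s_i$ above appear among the $13$ extremal rays produced. Since $E_5 \subseteq E(P_{01,5})$ (a word-set effective on all polytopes is in particular effective on the subfamily $P_{01,5}$), extremality in the larger cone $E(P_{01,5})$ implies extremality in $E_5$: any non-trivial decomposition in $E_5$ would a fortiori be a decomposition in $E(P_{01,5})$, contradicting extremality there.
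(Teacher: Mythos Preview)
Your argument is correct and matches the paper's (implicit) proof: effectiveness is the already-known non-negativity of the order-zero $g_i$, and extremality is read off from the \texttt{polymake} list of the $13$ extremal rays of $E(P_{01,5})$. The support argument and the observation that extremality in $E(P_{01,5})$ forces extremality in the smaller cone $E_5$ are valid additions beyond what the paper states, but note that the proposition itself is asserting extremality in $E(P_{01,5})$ (that is what ``For $P_{01,5}$'' signals and what the computation actually certifies), so your final paragraph deduces something slightly stronger than required rather than the claim as written.
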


\begin{proposition}
For $P_{01,5}$ the higher-order $g_s$ (in the form `index : word-set')
are
\begin{align*}
1211 &: CDCC,\ CCDC,\ CCCD,\ CDD\\
1121 &: CCDC,\ CCCD,\ DCD\\
1112 &: CCCD\\
122 &: CDD\\
212 &: DCD
\end{align*}
\end{proposition}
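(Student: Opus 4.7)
The strategy is the same computational pipeline used for the previous proposition, refined to isolate the higher-order components. By the Higher-order homology axiom, once $E(P_{01,5})$ and its extremal elements are known, identifying the higher-order components reduces to selecting those whose $\lambda_s$ vanishes on every simple polytope.

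First, I would dispose of the vanishing condition combinatorially. By the preceding proposition every simple $X$ satisfies $X \equiv \sum_{i=0}^{\lfloor d/2 \rfloor} g_i(X)\, D^i C^{d-2i}(\point)$. Hence $\lambda_s$ vanishes on all simple polytopes if and only if $s$ is disjoint from the set of \emph{simple words} $\{CCCCC,\ DCCC,\ DDC\}$ in degree $5$. Direct inspection confirms that each of the five word-sets listed in the proposition has this property, whereas each of the three word-sets of the previous proposition does meet this set.

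Second, effectiveness on $P_{01,5}$ is verified by evaluating $\lambda_s(X)$ for every $X$ in Aichholzer's list of $01$-polytopes and their polars, using the change-of-basis from the flag vector to the $(C,D)$ basis implemented in \cite{fine10:bbpoly}, and checking $\lambda_s \geq 0$ throughout. Third, extremality in $E(P_{01,5})$ is settled by a vertex enumeration in the polyhedral cone cut out by the inequalities $\{\lambda_s(X) \geq 0 : X \in P_{01,5}\}$; this is the \texttt{polymake} computation already reported at the end of the previous section. Its output is $13$ extremal word-sets, among which the three word-sets of the previous proposition account for the order-zero contribution and exactly the five listed here satisfy the vanishing test above. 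The remaining five extremal word-sets fall outside both classes, and their significance is flagged in the discussion but not asserted by the proposition.

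The main obstacle is computational rather than conceptual: correctness rests on Aichholzer's enumeration of over a million $01$-polytopes, on the $(C,D)$-basis change implemented in \cite{fine10:bbpoly}, and on the \texttt{polymake} vertex enumeration. There is no a priori shortcut, since the inequalities $\lambda_s(X) \geq 0$ defining $E(P_{01,5})$ are not implied by the preceding axioms and extremality can only be decided by examining the global structure of the cone; the role of the previous three sections is precisely to justify that this finite check, carried out on a conjecturally broad test family, captures the correct notion of higher-order component.
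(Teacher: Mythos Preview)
Your proposal is correct and follows the paper's approach: the proposition is presented in the paper as direct output of the computational pipeline (Aichholzer's list $\to$ \texttt{polymake} flag vectors $\to$ $(C,D)$-vectors via \cite{fine10:bbpoly} $\to$ effectiveness check over all $256$ word-sets $\to$ \texttt{polymake} extremality), with no separate argument given. Your explicit combinatorial criterion that $\lambda_s$ vanishes on simple polytopes iff $s$ is disjoint from $\{CCCCC, DCCC, DDC\}$ is a helpful clarification the paper leaves implicit, but otherwise the two accounts coincide.
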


A word about the notation above, which is also used later.  To each
$s$ we associate an index, which is a word in the symbols $1$ and $2$.
Thus, $g_{221}= g_2 = g_s$ where $s=\{DDC, DCD, CDD\}$.  The index
$i$, with $C$ and $D$ replacing $1$ and $2$, is the first listed
element of the set $s$. This notation is concise and, with care, the
context resolves any ambiguities.

We will now show that the above are a basis for all linear functions
of the flag vector.  Let $w$ be a word.  It is enough to show that
$\lambda_w$ is a linear combination of the $g_i$ above.  But this is
easy, because the components are listed in an upper-triangular order.
In other words, every word appears first in one of the sets, and it
does not appear in any subsequent set. This proves:

\begin{proposition}
The above components (of the conjectured $g$-vector) are a basis for
all linear functions of the flag vector.
\end{proposition}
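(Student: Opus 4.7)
The plan is to exploit the fact that, by the generalised Dehn-Sommerville theorem, the coefficient functionals $\lambda_w$ for the $F_6 = 8$ degree-$5$ words $w$ in $(C,D)$ already form a basis for the linear functions of the flag vector $f$. The eight proposed components have the form $g_{s_k} = \lambda_{s_k} = \sum_{w \in s_k} \lambda_w$, so we are presented with eight elements of an eight-dimensional space. It therefore suffices to prove linear independence, i.e.\ to show that the $8 \times 8$ incidence matrix $M$ defined by $M_{k,w} = (w \in s_k)$ has nonzero determinant.

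My plan is to show $M$ is upper triangular (with $1$'s on the diagonal) once rows are ordered by the sets $s_k$ as they appear in the two preceding propositions (namely $11111, 2111, 221, 1211, 1121, 1112, 122, 212$) and columns are ordered by the first-listed word of each set (namely $CCCCC, DCCC, DDC, CDCC, CCDC, CCCD, CDD, DCD$). Under the naming convention just introduced, the first-listed word $w_k$ of $s_k$ is always an element of $s_k$, which supplies the diagonal $1$. Triangularity then reduces to the single claim: for every $k$, the word $w_k$ lies in no later set $s_{k'}$ with $k' > k$.

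The verification is mechanical; one simply scans the tabular definitions of the sets given in the two propositions. For example, $CCCCC$ appears in no other listed set; $DCCC$ appears only in $s_2$; $DDC$ only in $s_3$; $CDCC$ appears in $s_2$ and $s_4$ but in none of $s_5, s_6, s_7, s_8$; $CCDC$ appears in $s_2, s_4, s_5$ but not afterwards; and similarly $CCCD, CDD, DCD$ each fail to occur strictly below their assigned row. Once this finite check is completed, the matrix is upper triangular with unit diagonal, hence invertible, and so the eight functionals $\lambda_{s_k}$ are linearly independent and span the space of linear functions of $f$.

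There is no real obstacle to carrying this out; the only content is the observation that the author has chosen to list the components precisely in an order making this triangular structure visible. The proof is essentially a remark that the listing provides a witness for invertibility, so the main work is simply exhibiting the pairing $s_k \leftrightarrow w_k$ and confirming by inspection that $w_k \notin s_{k'}$ whenever $k' > k$.
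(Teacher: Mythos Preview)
Your argument is correct and is exactly the paper's own: it observes that the listing is in an upper-triangular order, i.e.\ each word first appears as the leading element of some $s_k$ and then never in any later $s_{k'}$, so the incidence matrix is unit upper triangular and hence invertible. The paper phrases this as showing each $\lambda_w$ is a combination of the $g_i$, but the content is identical to your triangular-matrix check.
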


We now turn to the remaining effective and extremal $s$.  They do not
vanish on simple polytopes, and they are not order-zero $g_i$
numbers. This is why they are not part of the $g$-vector.

\begin{proposition}
For $P_{01,5}$ the remaining effective and extremal $s$ (in the form
`inequality : word-set') are
\begin{align*}
g_{122} \leq g_{221} + g_{2111} &\quad :\quad DDC,\ DCD,\ DCCC,\ CDCC,\ CCDC,\ CCCD\\
g_{122} \leq g_{221} + g_{1211} &\quad :\quad DDC,\ DCD,\ CDD,\ CDCC,\ CCDC,\ CCCD\\
g_{212} \leq g_{221} + g_{1112} &\quad :\quad DDC,\ CDD,\ CCCD\\
g_{212} \leq g_{221} + g_{1121} &\quad :\quad DDC,\ DCD,\ CDD,\ CCDC,\ CCCD\\
g_{212} \leq g_{221} + g_{2111} &\quad :\quad DDC,\ CDD,\ DCCC,\ CDCC,\ CCDC,\ CCCD
\end{align*}
where the left column is $\lambda_s \geq 0$ written in terms of the components
of the $g$-vector.
\end{proposition}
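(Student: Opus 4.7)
The plan is to verify both halves of the statement: first, that the five listed word-sets constitute the remaining extremal, effective elements of $E(P_{01,5})$; and second, that each one, when its effectiveness $\lambda_s \geq 0$ is re-expressed in the basis of $g$-vector components established in the preceding proposition, yields exactly the stated inequality in the left column.

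The first half is essentially the content of the computation reported in Section~7: \texttt{polymake} applied to the flag vectors of $P_{01,5}$ returned exactly $13$ extremal elements of $E(P_{01,5})$, and eight of these are accounted for by the order-zero and higher-order $g_i$ already identified. What remains to justify here is only that the residual five are the specific word-sets listed. This part is a reading of the computer output rather than a hand derivation.

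The second half is a finite linear-algebra check. By the previous proposition the eight components
\begin{equation*}
g_{11111},\ g_{2111},\ g_{221},\ g_{1211},\ g_{1121},\ g_{1112},\ g_{122},\ g_{212}
\end{equation*}
form a basis for the space of linear functionals on the flag vector, so each $\lambda_s$ has a unique expression in this basis. For each of the five word-sets $s$ one verifies the identity indicated by the left column, that is,
\begin{equation*}
\lambda_s \;=\; g_{221} + g_{u} - g_{v}
\end{equation*}
for the appropriate $u,v$, by comparing, word by word over the eight degree-$5$ words in $C$ and $D$, the indicator $(w\in s)$ to the corresponding $\pm 1$ combination of the indicators $(w\in s_i)$ of the $g$-vector sets. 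For instance, in the first line, $\{DDC,DCD,CDD\} \cup \{DCCC,CDCC,CCDC,CCCD\} \setminus \{CDD\}$ equals exactly the listed word-set $\{DDC,DCD,DCCC,CDCC,CCDC,CCCD\}$, the $CDD$ contributed by $g_{221}$ being cancelled by $-g_{122}$; the other four lines have the same shape, with a single word cancelling between a positive $g_{2111}$, $g_{1211}$, $g_{1121}$, or $g_{1112}$ term and either $-g_{122}$ or $-g_{212}$.

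Once the five identities are in hand, $\lambda_s \geq 0$ is the same inequality as the one displayed in the left column, which is the assertion. The main obstacle is not the mechanical verification of the five identities, which is a bounded combinatorial check on eight words, but rather the reliance on the computational enumeration of extremal elements of $E(P_{01,5})$: confidence that no further extremal word-set has been overlooked rests on the \texttt{polymake} extremality calculation of the previous section and, behind it, on the (still conjectural) broadness of $P_{01,5}$.
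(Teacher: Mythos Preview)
Your approach matches the paper's: the proposition is stated there as a direct report of the \texttt{polymake} extremality computation, with no separate proof, and the translation of each $\lambda_s\geq 0$ into the $g$-basis is exactly the word-by-word indicator check you describe. One small slip: in lines~3 and~5 the negative term $-g_{212}=\{DCD\}$ cancels against $g_{221}$, not against the middle term ($g_{1112}$ and $g_{2111}$ contain no $DCD$), so your blanket description of ``a single word cancelling between a positive $g_{2111}$, $g_{1211}$, $g_{1121}$, or $g_{1112}$ term and either $-g_{122}$ or $-g_{212}$'' is not literally right for those two lines; the word-by-word method you propose is still correct and yields the stated identities.
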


These inequalities, an unanticipated by-product of the calculation,
are more concisely be written as
\begin{align*}
g_{122} - g_{221} &\leq \min (g_{2111}, g_{1211})\\
g_{212} - g_{221} &\leq \min (g_{2111}, g_{1121}, g_{1112})
\end{align*}
and in this form they are similiar to McMullen's pseudo-power
inequalities $g_{i+1}\leq g_i^{\langle i \rangle}$.

\section{A special test polytope}

The $d=5$ calculation gives not only a formula for $g$ but also,
implicitly, a small and broad set of test polytopes.  Here is the most
interesting example. The set
\begin{equation*}
 s = \{ CDCC,\ CCDC,\ CCCD \}
\end{equation*}
has $\lambda_s = g_{1211} - g_{122}$ and is effective on the whole of
$P_{01,5}$ with just one exception (up to symmetries of the cube).

In this section let $X$ be the $01$-polytope which has all the
vertices of the $5$-cube except the set $V$ whose members, listed in
lexicographic order, are
\begin{align*}
&
u_1 = (0,0,0,0,0),\quad
u_2 = (0,0,0,1,1),\quad
u_3 = (0,1,1,0,0),\quad
u_4 = (0,1,1,1,1),
\\\relax
&
v_1 = (1,0,1,0,1),\quad
v_2 = (1,0,1,1,0),\quad
v_3 = (1,1,0,0,1),\quad
v_4 = (1,1,0,1,0)\>.
\end{align*}

The exception is the polar $X^\vee$ of $X$. The polytope $X^\vee$ has
flag vector (independent components in \texttt{polymake} order)
\begin{equation*}
f (X ) = (1, 24, 112, 152, 464, 80, 400, 696)
\end{equation*}
while the  $(C, D)$ vector is given by
\begin{align*}
&(CCCC, 1),\quad (CCCD, 8),\quad (CCDC, 56),\quad (CDCC, -66),
\\
&(CDD, 20),\quad (DCCC, 20)\quad (DCD, 0),\quad (DDC, -5)
\end{align*}
and so
\begin{equation*}
\lambda_s(X^\vee) = \lambda_{CDCC} + \lambda_{CCDC} + \lambda_{CCCD} =
-66 + 56 + 8 = -2
\end{equation*}
which is, as claimed, negative.  (This calculation, repeated many
times, gave us $E(P_{01,5})$.

The $g$-vector of $X^\vee$, however, is
\begin{align*}
&g_0 = 1,
\quad g_1 = 8 + 56 - 66 + 20 = 18,
\quad g_2 = 20 + 0 - 5 = 15,\\
&g_{1211} = 8 + 56 - 66 + 20 = 18,\quad
g_{1121} = 8 + 56 + 0 = 64,\quad
g_{1112} = 8,\quad
g_{122} = 20, \quad
g_{212} = 0
\end{align*}
whose components are (for this polytope by construction of $g$)
non-negative.

The set $V$ of missing vertices has a structure.  The $u_i$ lie on a
$4$-face, and the $v_i$ on the opposite $4$-face.  The vertices $u_1$
and $u_4$ give a diagonal of the $4$-face, as do $u_2$ and $u_3$.  The
vertex pairs $\{u_1, u_2\}$, $\{u_1, u_3\}$, $\{u_2, u_4\}$ and
$\{u_3, u_4\}$ all differ in two coordinates.  The same is true for
the $v_i$.  Finally, each $u_i$ differs from each $v_j$ in three
coordinates. To state this more formally, let $d_1$ denote the Hamming
(or Manhattan) metric on the vertices of the cube; $d_1$ counts how
many components differ. The following definition and proposition
summarise these facts.

\begin{definition}[Distance count]
Let $V'$ be a subset of the vertices of a cube.  For each vertex $v$
of the cube the $i$-th component of the \emph{distance count of $v$
  from $V'$} is the number of $v'\in V'$ with $d_1(v, v') = i$.
\end{definition}

\begin{proposition}
Let $v$ be any member of the missing vertices set $V$.  Then the
distance count of $v$ from $V$ is $(1,0,2,4,1,0)$.
\end{proposition}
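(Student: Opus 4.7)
The plan is to extract the distance count directly from the structural description of $V$ provided in the paragraph just above the proposition, reducing the claim to a routine inspection of Hamming distances among the eight listed $0$-$1$ vectors. Write $V = U \sqcup W$ with $U = \{u_1, u_2, u_3, u_4\}$ lying on the $4$-face $\{x_1 = 0\}$ and $W = \{v_1, v_2, v_3, v_4\}$ on the opposite $4$-face $\{x_1 = 1\}$.

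For a representative $u_i \in U$, I would assemble the count as follows. The entry $1$ in position $0$ accounts for $u_i$ itself. The pairings $\{u_1, u_2\}, \{u_1, u_3\}, \{u_2, u_4\}, \{u_3, u_4\}$ listed in the preceding paragraph show that $u_i$ is joined to exactly two other elements of $U$ at Hamming distance $2$, while the remaining ``diagonal'' partner (namely $u_4$ if $i = 1$, $u_3$ if $i = 2$, and symmetrically) sits at distance $4$. The stated fact that every $u_i$ differs from every $v_j$ in exactly three coordinates then contributes $4$ vertices at distance $3$. Summing, the count from $u_i$ is exactly $(1, 0, 2, 4, 1, 0)$.

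For a representative $v_j \in W$, the symmetric assertion ``The same is true for the $v_i$'' in the preceding paragraph gives the analogous pairings $\{v_1, v_2\}, \{v_1, v_3\}, \{v_2, v_4\}, \{v_3, v_4\}$ at distance $2$ within $W$, with the fourth $v$ at distance $4$; combined with the same cross-set distance of $3$ to all four $u_i$, the calculation yields the identical count.

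There is no real obstacle here. The only content is the verification of the underlying intra- and inter-set Hamming-distance data, which is an entirely mechanical coordinate-by-coordinate check of the eight explicit vectors. All structural observations needed have already been collected in the paragraph preceding the statement, so the proposition is essentially a repackaging of those facts as a single numerical invariant.
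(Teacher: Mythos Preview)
Your proposal is correct and matches the paper's approach: the paper gives no separate proof, explicitly introducing the proposition as a summary of the structural facts (the distance-$2$ pairings within $U$ and within $W$, the distance-$4$ diagonals, and the uniform distance~$3$ between every $u_i$ and every $v_j$) already laid out in the preceding paragraph. Your write-up simply makes that repackaging explicit.
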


This result has a converse.  It tells us that, at least in this case,
distance count can determine a vertex set.  As construction of
polytopes with special properties may be useful later, we state the
converse and give the somewhat pedestrian proof.

\begin{proposition}
Up to symmetry there is exactly one subset $V$ of the $5$-cube that
has distance count $(1, 0, 2, 4, 1, 0)$.
\end{proposition}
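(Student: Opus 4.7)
The plan is to exploit the symmetry group of the $5$-cube (of order $5!\cdot 2^5 = 3840$) together with the rigid combinatorial constraints implicit in the distance count. From $(1,0,2,4,1,0)$ I first read off the global structure of $V$: $|V|=8$; no two vertices of $V$ are at distance $1$ or $5$; and the ``distance $4$'' relation on $V$ is a perfect matching, pairing the eight vertices into four couples. Every $v\in V$ has exactly two neighbours in $V$ at distance $2$ and exactly four at distance $3$.

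Using the cube action I may place a first vertex $p_1=(0,0,0,0,0)$ in $V$. Its unique distance-$4$ partner $p_2$ has Hamming weight $4$, so by permuting coordinates I may take $p_2=(1,1,1,1,0)$. The two elements $a_1,a_2\in V$ at distance $2$ from $p_1$ have weight $2$; if either had its support meeting position $5$, that vertex would lie at distance $4$ from $p_2$, contradicting the uniqueness of $p_2$'s partner. Hence $a_1,a_2$ are supported on positions $1$--$4$, so they are among the six vectors $e_{ij}$ ($1\le i<j\le 4$), and their mutual distance is even, hence $2$ or $4$.

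The stabiliser of $\{p_1,p_2\}$ (the group $S_4\times S_1$ permuting the first four coordinates, together with the map $x_5\mapsto 1-x_5$) acts transitively on the $e_{ij}$ and, once $a_1=e_{12}$ is fixed, acts transitively on the two complementary choices of $a_2\in\{e_{13},e_{14}\}$ and transitively on $\{e_{34}\}$. So there are exactly two subcases: $a_2=e_{34}$ (distance $4$) or $a_2=e_{13}$ (distance $2$). In the first subcase, $\{a_1,a_2\}$ is already a matched pair, the four placed vertices $\{p_1,p_2,a_1,a_2\}$ all lie on the hyperplane $x_5=0$, and the remaining four vertices of $V$ must (by the count) lie at distance $3$ from each of the four placed vertices. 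Listing the vertices of the $5$-cube at distance $3$ from each of $p_1,p_2,e_{12},e_{34}$ and intersecting yields exactly the eight vectors $(1,*,*,*,*)$ with $x_5=0$ or $x_5=1$ chosen to match the weight, and these split uniquely into two distance-$4$ couples, recovering (after reidentifying coordinates) the set $V$ displayed in the previous section.

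The remaining work is to rule out the distance-$2$ subcase. Here I would push the same constraints: the distance-$4$ partners of $a_1=e_{12}$ and $a_2=e_{13}$ must be distinct elements of $V$, each at distance $3$ from $p_1,p_2$ and from the other $a_j$. A short enumeration shows that every such candidate forces a pair of vertices in $V$ at distance $1$, violating the count. This mechanical enumeration -- a handful of intersected ``distance-$3$ spheres'' inside the $32$-vertex cube -- is the only real obstacle, and it is entirely finite and routine, in keeping with the ``somewhat pedestrian'' character the author already acknowledges.
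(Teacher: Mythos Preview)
Your approach is the same step-by-step placement argument the paper uses, and the overall structure is fine, but the handling of the ``distance-$2$ subcase'' is both over-engineered and mis-described, and this is exactly the step where the paper is cleaner.

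Once you have placed $p_1=(0,0,0,0,0)$, $p_2=(1,1,1,1,0)$, and shown that $a_1,a_2$ are among the six $e_{ij}$ with $i,j\le 4$, you already know $d(a_k,p_1)=d(a_k,p_2)=2$ for $k=1,2$. If in addition $d(a_1,a_2)=2$, then $a_1$ (and $a_2$) has \emph{three} members of $V$ at distance~$2$, namely $p_1$, $p_2$, and the other $a_k$, contradicting the count component~$2$. That single observation is the whole content of the paper's line ``if not $u_3$ then it has distance~$2$ from $u_1,u_2,u_4$, which violates the distance count''. No enumeration of distance-$3$ spheres is needed. Moreover, your stated contradiction (``forces a pair of vertices in $V$ at distance~$1$'') is not what actually happens if one pursues your enumeration: a weight-$3$ vertex at distance~$4$ from the weight-$2$ vertex $a_1$ is ruled out by parity before any distance-$1$ pair appears, so there is simply no candidate for the partner $b_1$.

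Two smaller slips. Your description of the stabiliser of $\{p_1,p_2\}$ is wrong: the map $x_5\mapsto 1-x_5$ sends $p_1$ to $(0,0,0,0,1)\notin\{p_1,p_2\}$; the involution you want is the simultaneous flip of $x_1,\dots,x_4$. And in the distance-$4$ subcase, the intersection of the four distance-$3$ spheres contains exactly four vertices (not eight), all with $x_5=1$ (not $x_1=1$): they are $(1,0,1,0,1)$, $(1,0,0,1,1)$, $(0,1,1,0,1)$, $(0,1,0,1,1)$, which after a coordinate relabelling match the paper's $v_i$. With these corrections your argument and the paper's coincide.
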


\begin{proof}
Let $V$ be the subset above and let $V'$ be another one.  Pick a
vertex in $V'$.  By cube symmetry without loss of generality (wlog) it
is $u_1$.  There's only one vertex at distance $4$ from $u_1$ and wlog
is is $u_4$.  Pick a point at distance $2$ from $u_1$.  Its distance
from $u_4$ \emph{a priori} is $2$ or $4$. But $d_1(u_1, u_4)=4$ and so
by distance count it must be $2$ and wlog the point is $u_2$.

Now pick the other point at distance $2$ from $u_1$.  As before, its
first coordinate is zero.  If not $u_3$ then it has distance $2$ from
$u_1$, $u_2$ and $u_4$.  This violates the distance count.  So it is
$u_3$.

Now choose a vertex $v$ at distance $3$ from all the $u_i$.  We have
$d_1(u_1, v) = 3$ and so it contains $3$ ones. If the first component
is $0$ then $d_1(v_4, v) = 1$. So the first component is~$1$.  It
cannot be $(1, 0, 0, 1, 1)$ or $(1, 1, 1, 0, 0)$ because of $u_2$ and
$u_5$ respectively. Therefore the $v_i$ are the only vertices at
distance~$3$ from all the $u_i$.
\end{proof}

Finally, we restate these results putting the focus on $V$.

\begin{proposition}
The distance count $(1, 0, 2, 4, 1, 0)$ determines, by complement and
polarisation, the unique up to symmetry polytope in $P_{01,5}$ that
excludes $\{CDCC, CCDC, CCCD\}$.
\end{proposition}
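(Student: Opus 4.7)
The plan is to assemble three ingredients already in hand and observe that uniqueness follows by combining them. First, the explicit $(C,D)$-vector computation earlier in this section yields $\lambda_s(X^\vee) = -2 < 0$ for $s = \{CDCC, CCDC, CCCD\}$, so $X^\vee$ does exclude $s$. Second, the $d=5$ enumeration reported in Section 7 traversed every polytope of $P_{01,5}$ (via its $688{,}298$ distinct flag vectors) and found $X^\vee$ to be the sole polytope, up to cube symmetry, on which $\lambda_s$ is negative; this is precisely the content of the phrase ``with just one exception'' used when $s$ was introduced at the start of this section. Third, the previous proposition shows that the distance count $(1,0,2,4,1,0)$ determines the vertex subset $V$ of the 5-cube up to cube symmetry.

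Given these, the argument is a short composition. Starting from the distance count, one recovers $V$ (by the previous proposition), then the vertex set of $X$ as the complement of $V$ in the cube vertices, and finally $X^\vee$ by polarisation. Each step is canonical up to cube symmetry, and the endpoint $X^\vee$ is, by the Section~7 enumeration together with the $\lambda_s$ evaluation above, the unique element of $P_{01,5}$ on which the word-set $s$ fails to be effective. This is exactly the claim.

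The main obstacle is not combinatorial but computational: the uniqueness of the excluder within $P_{01,5}$ relies on the \texttt{polymake}-based traversal from Section 7 rather than on any a priori argument, and one has to accept that the reported exception is the only one. Conditional on that enumeration, the present proposition is purely a matter of chaining the characterisation of $V$ by its distance count with the identification of $X^\vee$ as the unique polytope in $P_{01,5}$ whose $(C,D)$-coefficients violate $\lambda_s \geq 0$.
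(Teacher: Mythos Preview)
Your proposal is correct and matches the paper's own treatment: the paper does not give this proposition a separate proof at all, but introduces it with ``Finally, we restate these results putting the focus on $V$,'' meaning it is just a summary of the three facts you identified (the $\lambda_s(X^\vee)=-2$ computation, the uniqueness of the exception from the enumeration, and the preceding proposition that the distance count pins down $V$). Your write-up simply makes explicit the chaining that the paper leaves to the reader, and your caveat about the reliance on the computer enumeration is apt.
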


%\newpage

\section{Summary}

The section reviews the rest of the paper and sketches possibilities
for future work.

The key new concepts are (1)~that the components of the $g$-vector are
zero-one on the $(C, D)$ basis, and (2)~the set $E = E_d$ of effective
word-sets given by $\{s | \lambda_s(X)\geq 0$ for all $X \}$.  They
arose as follows. The author computed $\lambda_s(X)$ for all $X$ in
$P_{01,5}$ and for the $s$ in~\cite{fine10:complete}.  This was to
test the $s$ for being effective.  However, as we have seen, some came
out negative.  The author then inverted the process.  He used
$P_{01,5}$ to compute $E(P_{01,5})$ and hence obtained a conjecture.
The structure of $E$, even if not as described by the axioms, is most
likely worth studying.  The conjectures make definite statements about
$E(P_{01,d})$ that can be tested by finite calculation.

Two of the components of $g$ for $d=5$, namely
\begin{align*}
1211 &: CDCC,\ CCDC,\ CCCD,\ CDD\\
1121 &: CCDC,\ CCCD,\ DCD
\end{align*}
have mixed degree in $D$.  The meaning of the $\deg_D=2$ terms, $CDD$
and $DCD$ respectively, is unclear.  Although they may look
improbable, they are an unavoidable consequence of our axioms and
conjectures.  The author expects that in general the lowest degree
terms will be given as in~\cite{fine10:complete}.

The meaning of the conjectured inequalities
\begin{align*}
g_{122} - g_{221} &\leq \min (g_{2111}, g_{1211})\\
g_{212} - g_{221} &\leq \min (g_{2111}, g_{1121}, g_{1112})
\end{align*}
is far from clear.  The analogous growth conditions in the simple case
follow from $H(X)$ being generated by the facets of $X$.  It is
possible that \cite{fine98:uniform} will help here.

Further calculations would be helpful.  However, the size of
$P_{01,d}$ grows very rapidly with $d$.  It might be practical to
compute for the whole of $P_{01,6}$ but for $P_{01,7}$ only a subset
can be used.  Analysis of the $d=4$ and $5$ results may lead to
smaller broad subsets of $P_{01,d}$.  This is of course related to the
construction of polytopes and thus the proof of sufficiency.  Results
in dimension~$4$ should be compared to known results, conjectures and
constructions~\cite{bayerlee94:aspects,bayer87:extended,pafwern05:construction}.

Simplicial poltyopes are a useful special case.  Their polars are
simple, and so McMullen's conditions characterise their flag vectors.
It may be instructive, say for $d=5$, to write the already known
conditions on simplicial polytopes in terms of the conjectured
$g$-vector.

The extension of McMullen's conditions to general polytopes will have
at least three components, namely: (1)~linear equations on the flag
vector, (2)~linear equalities $g_s(X)\geq 0$, and (3)~growth
conditions on the $g_s$.  The generalised Dehn-Sommerville equations
of Bayer and Billera provide~(1).  Our first conjecture, if true,
provides~(2) abstractly while our second conjecture will then provide
a finite calculation that gives~(2) in a concrete form.  It may also
provide part of~(3).

Here are three more distant goals: (1)~produce, with supporting
evidence, a conjectured formula for $g$ in all dimensions,
(2)~systematically produce test polytopes, particularly in higher
dimensions, (3)~translate the formulas and conjectures in this paper
to intersection homology.  This last goal is probably the only way to
prove the conjectures, probably via the decomposition
theorem~\cite{dcml09:decomp}.

\bibliographystyle{amsplain}
\bibliography{axioms-for-g-vector.bib,../../2009/ehv/bayer-master.bib}

\rightline{Email: \texttt{jfine@pytex.org}}

\end{document}